\setlist{nolistsep}        
\newtheorem{theorem}{Theorem}[section]
\newtheorem{lemma}[theorem]{Lemma}
\theoremstyle{definition}
\newtheorem*{examples}{Examples}
\newcommand{\abs}[1]{\left\lvert#1\right\rvert}
\newcommand{\act}{\cdot}
\newcommand{\action}{\curvearrowright}
\newcommand{\Aset}{\mathcal{A}}
\newcommand{\BLipb}{\mathsf{BLip_b}}
\newcommand{\Cb}{\mathsf{C_b}}
\newcommand{\conv}{\star}
\newcommand{\ellinfty}{\ell_\infty}
\newcommand{\Fset}{\mathcal{F}}         
\newcommand{\iin}{\mathord\in}
\newcommand{\mmeas}{\mathfrak{m}}
\newcommand{\Means}{\mathfrak{M}^{+1}(S)}
\newcommand{\Mol}{\mathsf{Mol}}
\newcommand{\norm}[1]{\lVert#1\rVert}
\newcommand{\pmass}{\partial}           
\newcommand{\psm}{\Delta}
\newcommand{\ru}{\mathsf{r}}
\newcommand{\tfine}{\mathsf{\scriptstyle F}}
\newcommand{\Ub}{\mathsf{U_b}}
\newcommand{\wstar}{weak$^\ast$}
\title{Approximate fixed points and B-amenable groups}
\author{Jan Pachl  \\
        Toronto, Ontario, Canada}
\date{September 17, 2018}
\begin{document}
\maketitle

\begin{abstract}
A topological group $G$ is B-amenable if and only if every continuous affine action of $G$
on a bounded convex subset of a locally convex space has an approximate fixed point.
Similar results hold more generally for slightly uniformly continuous semigroup actions.
\end{abstract}


\section{Introduction}

For a locally compact group $G$,
denote by $\mathsf{L}_\infty (G)$ the space of (equivalence classes of)
bounded Haar-measurable functions,
by $\Cb(G)$ the space of bounded continuous functions on $G$,
and by $\Ub(\ru G)$ the space of bounded right uniformly continuous functions on $G$.
The following properties are equivalent characterizations of amenable
locally compact groups~\cite{Berglund1989aos}\cite{Greenleaf1969imt}%
\cite{Grigorchuk2017aep}\cite{Paterson1988ame}.
\begin{enumerate}[label=(\Alph*)]
\item\label{intro-Ub}
There exists a left-invariant mean on $\Ub(\ru G)$.
\item\label{intro-Cb}
There exists a left-invariant mean on $\Cb(G)$.
\item\label{intro-Linfty}
There exists a left-invariant mean on $\mathsf{L}_\infty (G)$.
\item\label{intro-FP}
Every continuous affine action of $G$ on a compact convex set has a fixed point.
\end{enumerate}
Properties~\ref{intro-Ub} to~\ref{intro-FP} have been extended in many directions,
including these two:
\begin{itemize}
\item[1.]
Expand the scope from locally compact groups to general topological groups.
Then $\mathsf{L}_\infty (G)$ is no longer available,
but $\Cb(G)$ and $\Ub(\ru G)$ are well-defined.
Conditions~\ref{intro-Ub} and~\ref{intro-FP} are still equivalent
but~\ref{intro-Ub} and~\ref{intro-Cb} are not~\cite{Grigorchuk2017aep}:
The infinite symmetric group with the topology of pointwise convergence
and the unitary group of a separable infinite dimensional Hilbert space
with the strong operator topology have property~\ref{intro-Ub} but not~\ref{intro-Cb}.
Following~\cite{Grigorchuk2017aep}, a topological group $G$ is defined to be \emph{amenable}
iff it satisfies property~\ref{intro-Ub}, and \emph{B-amenable} iff it satisfies~\ref{intro-Cb}.
\item[2.]
Consider affine actions on more general convex sets.
Here fixed points are too much to ask for.
The next best thing are approximate fixed points,
with property~\ref{intro-FP} replaced by
\end{itemize}
\begin{enumerate}[label=(\Alph*),resume]
\item
    \label{intro-AFP}
Every continuous affine action of $G$ on a bounded convex set has an approximate fixed point.
\end{enumerate}
\begin{itemize}
\item[]
Barroso, Mbombo and Pestov~\cite{Barroso2017tga} studied property~\ref{intro-AFP}.
They proved several results relating~\ref{intro-Ub} to the existence of approximate fixed points,
and showed that the infinite symmetric group does not have property~\ref{intro-AFP}.
\end{itemize}

\noindent
By Theorem~\ref{th:Bamenable} of the current paper,
\ref{intro-Cb} is equivalent to~\ref{intro-AFP} for general topological groups.
The proof is not specific to continuous actions by topological groups.
It applies equally well to semigroup actions that satisfy a certain
quite weak uniform continuity property;
that is presented in section~\ref{sec:AFP},
after definitions and preliminary results in section~\ref{sec:prelim}.
The general result covers also the case of amenable groups,
due to Schneider and Thom~\cite{Schneider2018ofs},
which is derived in section~\ref{sec:amenable}.


\section{Preliminaries}
    \label{sec:prelim}

Throughout the paper, linear spaces are over the scalar field,
either the reals or the complex numbers.
Functions are mappings to the scalar field.
Most of the following notation and terminology is taken from~\cite{Pachl2013usm}.
Several prerequisites are proved in~\cite{Pachl2013usm} for real-valued functions;
the complex case follows simply by considering the real and imaginary part of the function.

Topological, uniform and locally convex spaces are assumed to be Hausdorff.
When $S$ is a uniform space, $\Ub(S)$ denotes the space of bounded uniformly continuous
functions on $S$ with the sup norm.
When $G$ is a topological group, $\ru G$ is the right uniform space of $G$;
that is, the set $G$ with the right uniform structure of $G$.
For any completely regular topological space $T$, $\tfine T$ denotes the set $T$
with the fine uniform structure~\cite[1.25]{Pachl2013usm}.
Thus $\Ub(\tfine T)=\Cb(T)$, the space of bounded continuous functions on $T$.
When $S$ is a set with the discrete uniformity, $\Ub(S)$ is simply $\ellinfty(S)$.

When $X$ is a locally convex space, $X^\ast$ denotes its topological dual.
In particular, when $S$ is a~uniform space,
$\Ub(S)^\ast$ is the dual of the Banach space $\Ub(S)$.

Let $S$ be a semigroup, $s\iin S$, and let $f$ be a function on $S$.
Define the functions $_s f$ and $f_s$ (the \emph{left} and \emph{right translate of $f$})
by $_s f (t) := f(st)$ and $f_s (t) := f(ts)$ for $t\iin S$.

I need a short name for a semigroup $S$ with a uniform stucture on $S$
such that $_s f\iin\Ub(S)$ and $f_s \iin\Ub(S)$ for all $s\iin S$, $f\iin\Ub(S)$.
Such a structure will be called an \emph{FIU semigroup},
or an \emph{FIU group} when $S$ is actually a group.
(FIU stands for Functionally Invariant Uniform structure.)

\begin{examples} \mbox{}
\begin{enumerate}[label=(\alph*)]
\item
Let $G$ be a topological group.
Then $G$ with any one of the left, right, upper and lower uniformities~\cite{Roelcke1981ust}
is an FIU group.
\item
Let $S$ be a semitopological semigroup; that is, a Hausdorff topological space with a separately
continuous semigroup operation.
Then $S$ with its fine uniformity is an FIU semigroup.
In particular, any topological group with its fine uniformity is an FIU group.
\item
Every semiuniform semigroup~\cite[3.14]{Pachl2013usm} is an FUI semigroup.
That includes, for example, the completion of any topological group with its right uniformity,
and the unit ball of any Banach algebra with the multiplication operation and the metrizable
uniform structure defined by the norm.
\item
Let $S$ be a semigroup, and let $\Aset$ be a linear subspace of $\ellinfty(S)$
that separates the points of $S$
and is invariant under left and right translations by elements of $S$.
Equip $S$ with the uniformity induced by $\Aset$;
that is, the coarsest uniformity for which $\Aset\subseteq\Ub(S)$.
Then $S$ is a precompact FIU semigroup.
\qed
\end{enumerate}
\end{examples}

When $S$ is an FIU semigroup and $C$ is a subset of a locally convex space $X$,
$S\action C$ means that $S$ acts on $C$ by a mapping $(s,x)\mapsto s\act x$
from $S\times C$ to $C$.
The action $S\action C$ is said to be \emph{affine} iff the mapping $x\mapsto s\act x$
is affine for every $s\iin S$.
The action is \emph{jointly} or \emph{separately continuous}
iff the mapping $(s,x)\mapsto s\act x$ is such.
The action is \emph{slightly continuous}~\cite{Day1964cfp} iff there is at least one $x\iin C$
such that the mapping $s\mapsto s\act x$ is continuous from $S$ to $X$.
More generally, the action is \emph{slightly uniformly continuous} iff there is at least one $x\iin C$
such that the mapping $s\mapsto s\act x$ is uniformly continuous from $S$ to $X$.
Obviously every separately continuous action is slightly continuous.
An \emph{approximate fixed point} for the action $S\action C$ is a net
$\{x_\gamma\}_\gamma$ in $C$ such that $\lim_\gamma x_\gamma - s\act x_\gamma = 0$
in the topology of $X$ for every $s\iin S$.

For $s\iin S$, the \emph{point mass $\pmass(s)\iin\Ub(S)^\ast$ at $s$} is defined by
$\pmass(s)(f):=f(s)$ for $f\iin\Ub(S)$.
The subspace of $\Ub(S)^\ast$ consisting of finite linear combinations of point masses
(called \emph{molecular measures}) is denoted by $\Mol(S)$.
The \emph{\wstar\ topology} on $\Mol(S)$ is the restriction of the \wstar\ topology
(the $\Ub(S)$-weak topology) on $\Ub(S)^\ast$.

For $s\iin S$ and $\mmeas\iin\Ub(S)^\ast$, define $s\conv\mmeas\iin\Ub(S)^\ast$ by
$s\conv\mmeas (f) := \mmeas( _s f)$ for $f\iin\Ub(S)$.
Clearly $s\conv\pmass(t) = \pmass(st)$ for $s,t\iin S$.
Say that $\mmeas\iin\Ub(S)^\ast$ is \emph{left-invariant} iff
$s\conv\mmeas = \mmeas$ for all $s\iin S$.

A functional $\mmeas\iin\Ub(S)^\ast$ is a \emph{mean} iff $\mmeas\geq 0$ and $\mmeas(1) = 1$.
Denote by $\Means$ the set of means in $\Ub(S)^\ast$.
In the \wstar\ topology on $\Ub(S)^\ast$, $\Means$ is a compact convex set.
Write $\Mol^{+1}(S):=\Means\cap\Mol(S)$.
Every $\mmeas\iin\Mol^{+1}(S)$ is of the form $\mmeas=\sum_{i=0}^j r_i \pmass(s_i)$
where $0\leq r_i \leq 1$ and $s_i\iin S$ for $i=0,1,\dots,j$, and $\sum_{i=0}^j r_i = 1$.

When $S$ is a uniform space,
a subset of $\Ub(S)$ is a UEB$(S)$ \emph{set}
iff it is uniformly equicontinuous and bounded in the sup norm~\cite[1.19]{Pachl2013usm}.
The UEB$(S)$ \emph{topology} on $\Ub(S)^\ast$
is the topology of uniform convergence on UEB$(S)$ subsets of $\Ub(S)$.
In the particular case $S=\tfine T$, where $T$ is a completely regular space,
the UEB$(\tfine T)$ topology on $\Cb(T)^\ast$ is also called the EB$(T)$ \emph{topology};
it is the topology of uniform convergence on equicontinuous uniformly bounded subsets of $\Cb(T)$.

\begin{lemma}
    \label{lem:prelim}
Let $S$ be an FIU semigroup.
\begin{enumerate}[label=(\alph*)]
\item\label{lem:prelim:EBdual}
When $\Mol(S)$ is equipped with the UEB$(S)$ topology, the dual of $\Mol(S)$ is $\Ub(S)$.
\item\label{lem:prelim:dense}
The set $\Mol^{+1}(S)$ is \wstar\ dense in $\Means$.
\item\label{lem:prelim:leftcon}
Let $s\iin S$.
Equip $\Ub(S)^\ast$ with the \wstar\ topology.
Then the mapping $\mmeas\mapsto s\conv\mmeas$ is continuous from $\Ub(S)^\ast$ to itself.
\item\label{lem:prelim:rightcon}
Let $\mmeas\iin\Mol(S)$.
Equip $\Mol(S)$ with the \wstar\ topology.
Then the mapping $s\mapsto s\conv\mmeas$ is uniformly continuous from $S$ to $\Mol(S)$.
\item\label{lem:prelim:wstarext}
Let $X$ be a locally convex space.
Let $\Phi\colon S \to X$ be a uniformly continuous mapping whose range $\Phi(S)$ is bounded in $X$.
Denote by $\widetilde{\Phi} \colon \Mol(S) \to X$ the unique linear mapping
such that $\widetilde{\Phi} \circ \pmass = \Phi$.
Then $\widetilde{\Phi}$ is continuous from $\Mol(S)$ with the \wstar\ topology
to $X$ with its weak topology.
\end{enumerate}
\end{lemma}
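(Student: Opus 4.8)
The plan is to prove the five parts one at a time; in each the substance is a standard locally-convex-space fact combined with a single invocation of the FIU hypothesis, so most of the work is bookkeeping. I will use throughout that in the Hausdorff uniform space $S$ the functions in $\Ub(S)$ separate points, whence the point masses $\pmass(s)$ are linearly independent, $\Mol(S)$ is their linear span, and $\widetilde{\Phi}$ in~\ref{lem:prelim:wstarext} is well-defined. For~\ref{lem:prelim:EBdual}: each $f\iin\Ub(S)$ is a UEB$(S)$ set on its own, so $\mmeas\mapsto\mmeas(f)$ is one of the seminorm-defining maps of the UEB$(S)$ topology and hence UEB$(S)$-continuous; conversely, given a UEB$(S)$-continuous linear functional $\phi$ on $\Mol(S)$, choose (after rescaling) a UEB$(S)$ set $E$ with $\abs{\phi(\mmeas)}\leq\sup_{g\iin E}\abs{\mmeas(g)}$ for all $\mmeas$, and put $f(s):=\phi(\pmass(s))$; uniform equicontinuity of $E$ together with $\abs{f(s)-f(t)}\leq\sup_{g\iin E}\abs{g(s)-g(t)}$ makes $f$ uniformly continuous, boundedness of $E$ makes $f$ bounded, and $\phi$ and $\mmeas\mapsto\mmeas(f)$ --- being linear and agreeing on the point masses --- agree on $\Mol(S)$. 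For~\ref{lem:prelim:dense}: every $\pmass(s)$ is a mean, so $\Mol^{+1}(S)\subseteq\Means$ and the \wstar-closure of $\Mol^{+1}(S)$ lies in the \wstar-closed set $\Means$; if some mean $\mmeas_0$ lay outside that closure, Hahn--Banach separation in $\Ub(S)^\ast$ would furnish $f\iin\Ub(S)$ and a real $c$ with $\operatorname{Re}f(s)\leq c$ for all $s\iin S$ but $\operatorname{Re}\mmeas_0(f)>c$, contradicting $\mmeas_0\geq0$ applied to the nonnegative function $c-\operatorname{Re}f$.

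Parts~\ref{lem:prelim:leftcon} and~\ref{lem:prelim:rightcon} are exactly where the two halves of the FIU property are used. For~\ref{lem:prelim:leftcon}, the operator $f\mapsto{}_s f$ carries $\Ub(S)$ into itself (FIU) and is bounded, so its adjoint $\mmeas\mapsto s\conv\mmeas$ is continuous for the \wstar\ topology; equivalently, for fixed $f\iin\Ub(S)$ the map $\mmeas\mapsto(s\conv\mmeas)(f)=\mmeas({}_s f)$ is \wstar-continuous because ${}_s f\iin\Ub(S)$. For~\ref{lem:prelim:rightcon}, first note $s\conv\mmeas\iin\Mol(S)$ (by linearity of $s\conv$ and $s\conv\pmass(t)=\pmass(st)$) and that the \wstar\ uniformity on $\Mol(S)$ is generated by the seminorms $\nu\mapsto\abs{\nu(f)}$, $f\iin\Ub(S)$; writing $\mmeas=\sum_i c_i\pmass(u_i)$,
\begin{align*}
(s\conv\mmeas)(f)-(t\conv\mmeas)(f)
 &=\sum_i c_i\bigl(f(su_i)-f(tu_i)\bigr)\\
 &=\sum_i c_i\bigl(f_{u_i}(s)-f_{u_i}(t)\bigr),
\end{align*}
and each right translate $f_{u_i}$ lies in $\Ub(S)$ by FIU, hence is uniformly continuous; choosing an entourage of $S$ on which the finitely many $f_{u_i}$ all oscillate by less than $\varepsilon/(1+\sum_i\abs{c_i})$ then bounds the displayed difference by $\varepsilon$, so $s\mapsto s\conv\mmeas$ is uniformly continuous into $\Mol(S)$.

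For~\ref{lem:prelim:wstarext}, given $\mmeas=\sum_i c_i\pmass(s_i)$ and $x^\ast\iin X^\ast$ one computes $\langle x^\ast,\widetilde{\Phi}(\mmeas)\rangle=\sum_i c_i\langle x^\ast,\Phi(s_i)\rangle=\mmeas(x^\ast\circ\Phi)$. A continuous linear functional $x^\ast$ on the locally convex space $X$ is uniformly continuous and sends bounded sets to bounded sets, and $\Phi$ is uniformly continuous with bounded range, so $x^\ast\circ\Phi$ is bounded and uniformly continuous, i.e.\ $x^\ast\circ\Phi\iin\Ub(S)$; therefore $\mmeas\mapsto\langle x^\ast,\widetilde{\Phi}(\mmeas)\rangle$ is \wstar-continuous on $\Mol(S)$ for every $x^\ast\iin X^\ast$, which is precisely the asserted continuity of $\widetilde{\Phi}$ from $\Mol(S)$ with the \wstar\ topology into $X$ with its weak topology. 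The steps calling for any care are the converse inclusion in~\ref{lem:prelim:EBdual} --- representing a UEB$(S)$-continuous functional as an element of $\Ub(S)$ through its values on point masses --- and the rewriting $f(su)-f(tu)=f_u(s)-f_u(t)$ in~\ref{lem:prelim:rightcon}, which replaces the uniform continuity of right translation on $S$ (not literally among the hypotheses) by the FIU property $f_u\iin\Ub(S)$; I expect that last substitution to be the main point to pin down carefully.
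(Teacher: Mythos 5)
Your proposal is correct, and parts \ref{lem:prelim:dense}--\ref{lem:prelim:wstarext} follow essentially the same route as the paper: the same Hahn--Banach separation for \ref{lem:prelim:dense} (your contrapositive phrasing via a separating $f$ with $\operatorname{Re}f\leq c$ on $S$ is the paper's argument read backwards), the same observation ${}_s f\iin\Ub(S)$ for \ref{lem:prelim:leftcon}, the identical rewriting $f(su_i)-f(tu_i)=f_{u_i}(s)-f_{u_i}(t)$ with $f_{u_i}\iin\Ub(S)$ for \ref{lem:prelim:rightcon} (you are right that this is where the right-translation half of the FIU hypothesis enters, and your explicit $\varepsilon/(1+\sum_i\abs{c_i})$ bookkeeping just spells out what the paper leaves implicit), and the identity $\xi(\widetilde{\Phi}(\mmeas))=\mmeas(\xi\circ\Phi)$ for \ref{lem:prelim:wstarext}. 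The one genuine difference is part \ref{lem:prelim:EBdual}. The paper reduces the UEB$(S)$ topology to uniform convergence on the sets $\BLipb(\psm)$, checks that each such set is compact in the $\Mol(S)$-weak topology on $\Ub(S)$, and invokes the Mackey--Arens theorem; you instead argue directly, dominating a UEB$(S)$-continuous functional $\phi$ by a single seminorm $\sup_{g\in E}\abs{\mmeas(g)}$ (legitimate, since finite unions and scalar multiples of UEB$(S)$ sets are UEB$(S)$ sets) and recovering $f(s):=\phi(\pmass(s))$, whose uniform continuity and boundedness follow from the equicontinuity and boundedness of $E$, and which represents $\phi$ on the span of the point masses. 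Your argument is more elementary and self-contained --- it avoids the compactness verification and the duality theorem altogether --- at the cost of being specific to this situation, whereas the paper's reduction to $\BLipb(\psm)$ is the form in which the fact is proved and reused in the cited monograph. Both are complete proofs.
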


\begin{proof}
\ref{lem:prelim:EBdual}
(See Lemma~6.5 in~\cite{Pachl2013usm}.)
If $\Fset\subseteq\Ub(S)$ is a UEB$(S)$ set such that $\norm{f}\leq 1$ for $f\iin\Fset$ then
\[
\psm(s,t) := \sup \{ \abs{f(s)-f(t)} \mid f\iin\Fset \}, \quad s,t\iin S,
\]
defines a uniformly continuous pseudometric $\psm$ on $S$, and $\Fset$ is a subset of
\[
\BLipb(\psm) := \{ f\iin\Ub(S) \mid \norm{f}\leq 1 \text{ and } \abs{f(s)-f(t)}\leq\psm(s,t)
                                    \text{ for } s,t\in S \}.
\]
Thus the UEB$(S)$ topology is the topology of uniform convergence on the sets $\BLipb(\psm)$ for
uniformly continuous pseudometrics $\psm$.
Each such set is compact in the $\Mol(S)$-weak topology on $\Ub(S)$.
Apply the Mackey--Arens theorem~\cite[IV.3.2]{Schaefer1971tvs}.

\ref{lem:prelim:dense}
is a simple application of the Hahn--Banach theorem:
The set $\Mol^{+1}(S)$ is convex.
If a real-valued $f\iin\Ub(S)$ and a real number $r$ are such that
$\mmeas(f)\leq r$ for all $\mmeas\iin\Mol^{+1}(S)$ then in particular
$f(s)\leq r$ for all $s\iin S$,
therefore $\mmeas(f)\leq r$ for all $\mmeas\iin\Means$.

\ref{lem:prelim:leftcon}
follows from the definition of $s\conv\mmeas$.

\ref{lem:prelim:rightcon}
Write $\mmeas=\sum_{i=0}^j r_i \pmass(s_i)$ and fix $f\iin\Ub(S)$.
For $s,t\iin S$ we have
\[
\abs{s\conv\mmeas(f) - t\conv\mmeas(f)}
= \abs{\sum_{i=0}^j r_i (s\conv\pmass(s_i)(f) - t\conv\pmass(s_i)(f))}
\leq \sum_{i=0}^j \abs{r_i} \cdot \abs{f_{s_i}(s) - f_{s_i}(t)} .
\]
Since $f_{s_i}\iin\Ub(S)$ for $i=0,1,\dots,j$, it follows that the function
$s\mapsto s\conv\mmeas(f)$ is uniformly continuous.

\ref{lem:prelim:wstarext}
Take any $\xi\iin X^\ast$.
Then $\xi\circ\Phi$ is uniformly continuous, and it is bounded because $\Phi(S)$ is bounded in $X$.
Thus $\xi\circ\Phi\iin\Ub(S)$.
From the linearity of $\widetilde{\Phi}$ we get
$\xi(\widetilde{\Phi}(\mmeas)) = \mmeas(\xi\circ\Phi)$ for all $\mmeas\iin\Mol(S)$.
\end{proof}


\section{Approximate fixed point property}
    \label{sec:AFP}

Let $C$ be a convex subset of a locally convex space $X$.
Following Barroso et al~\cite{Barroso2017tga},
say that a topological group $G$ has the
\emph{approximate fixed point (AFP) property on $C$}
iff every jointly continuous affine action of $G$ on $C$ has an approximate fixed point.

When $C$ is compact, the AFP property is equivalent to the existence of fixed points.
Indeed, if $\{x_\gamma\}_\gamma$ is an approximate fixed point for $G\action C$
and if the mapping $x\mapsto g\act x$ is continuous on $C$ for every $g\iin G$
then every cluster point of the net $\{x_\gamma\}_\gamma$ in $C$ is a fixed point.

The following theorem,
an abstract version of
the ``celebrated method of Day''~\cite{Day1957as}\cite[\S2.4]{Greenleaf1969imt},
states that an affine action has an approximate fixed point
if it has a weak approximate fixed point.

\begin{theorem}
    \label{th:Day}
Let $S$ be a semigroup, and let $S\action C$ be an affine action of $S$ on a convex subset
of a~locally convex space $X$.
Assume there is a net $\{x_\gamma\}_\gamma$ in $C$
such that $\lim_\gamma x_\gamma - s\act x_\gamma = 0$
in the weak topology of $X$ for every $s\iin S$.
Then the action has an approximate fixed point.
\end{theorem}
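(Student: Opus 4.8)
The plan is to transfer the hypothesis along an affine map into a product space and then invoke the fact that a convex set in a locally convex space has one and the same closure in the weak and in the original topology (Mazur's theorem, a consequence of Hahn--Banach separation, cf.\ \cite[IV]{Schaefer1971tvs}). First I would form the locally convex space $Y := X^S$, the product of copies of $X$ indexed by $S$, with the product topology, and define $\Psi\colon C\to Y$ by $\Psi(x):=(x-s\act x)_{s\iin S}$. Since $x\mapsto s\act x$ is affine for each $s\iin S$, each coordinate map $x\mapsto x-s\act x$ is affine from $C$ to $X$, hence $\Psi$ is affine and $K:=\Psi(C)$ is a convex subset of $Y$.

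Next I would identify the two topologies on $Y$ that matter. The continuous dual of a product of locally convex spaces with the product topology is the direct sum of the duals (functionals of finite support), so $Y^\ast=\bigoplus_{s\iin S}X^\ast$ and the weak topology $\sigma(Y,Y^\ast)$ is exactly the product of the weak topologies of the factors; in particular a net in $Y$ converges weakly iff each of its coordinates converges weakly in $X$. The hypothesis says precisely that $\Psi(x_\gamma)=(x_\gamma-s\act x_\gamma)_{s\iin S}\to 0$ coordinatewise in the weak topology of $X$, i.e.\ $\Psi(x_\gamma)\to 0$ in $\sigma(Y,Y^\ast)$. Thus $0$ lies in the closure of $K$ in the weak topology of $Y$.

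Now apply Mazur's theorem to the convex set $K\subseteq Y$: its closure in $\sigma(Y,Y^\ast)$ coincides with its closure in the product topology of $Y$. Hence $0$ belongs to the closure of $K$ in the product topology, so there is a net $\{y_\delta\}_\delta$ in $K$ with $y_\delta\to 0$ in $Y$. Choosing for each $\delta$ a point $x_\delta\iin C$ with $\Psi(x_\delta)=y_\delta$, we obtain a net $\{x_\delta\}_\delta$ in $C$ with $x_\delta-s\act x_\delta\to 0$ in the original topology of $X$ for every $s\iin S$; that is an approximate fixed point.

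The only two routine verifications are the description of $Y^\ast$ (hence of the weak topology of the product) and the affinity of $\Psi$; neither should cause trouble. The genuine content is the coincidence of weak and original closures for a convex set, and its applicability is immediate once $K$ is recognized as convex, so I do not anticipate a real obstacle. If one prefers to avoid the infinite product, the same argument runs verbatim with $Y$ replaced by the finite products $X^n$ over finite subsets $\{s_1,\dots,s_n\}\subseteq S$, producing the desired net indexed by the finite subsets of $S$, the continuous seminorms of $X$, and the positive reals; this is just the more hands-on form of the same idea, and is essentially the classical ``method of Day''.
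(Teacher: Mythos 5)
Your proposal is correct and follows essentially the same route as the paper: both form the product $Y=X^S$, map $C$ into it via $x\mapsto(x-s\act x)_{s\iin S}$, identify the weak topology of the product with the product of the weak topologies, and conclude via the coincidence of weak and original closures of the convex image. No substantive difference.
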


\begin{proof}
Let $Y:=X^S$ be the space of all families $\{x_s\}_{s\in S}$ of $x_s \iin X$;
that is, the product of copies of $X$, one for each $s\iin S$.
Consider two locally convex topologies on $Y$:
$t$ is the product topology,
and $t_{w}$ is the product of the weak topologies on the copies of $X$.
The weak topology on the product of locally convex spaces
is the product of weak topologies~\cite[IV.4.3]{Schaefer1971tvs}.
Hence $t$ and $t_{w}$ on $Y$ have the same dual
and therefore the same closed convex sets~\cite[II.9.2]{Schaefer1971tvs}.
Define the convex set $D\subseteq Y$ by
$D := \{ \{x - s\act x\}_{s\in S} \mid x\in C \}$.
By the assumption, $0\iin Y$ is in the $t_{w}$ closure of $D$ which is also the $t$ closure of $D$,
so that the conclusion holds.
\end{proof}

Next we apply Theorem~\ref{th:Day} to the natural affine action $(s,\mmeas)\mapsto s\conv\mmeas$
of an FIU semigroup $S$ on $\Means$.
The set $\Mol^{+1}(S)\subseteq\Means$ is $S$-invariant under the action;
in fact, if $\mmeas=\sum_{i=0}^j r_i \pmass(s_i)$ then
$s\conv\mmeas=\sum_{i=0}^j r_i \pmass(s s_i)$.

The next theorem is essentially due to Day~\cite{Day1957as}\cite{Day1961fpt}\cite{Day1964cfp},
for whom approximate fixed point were a means for constructing fixed points.
The means in $\Mol(S)$ are the \emph{finite means} of Day.
The equivalence \ref{th:UbAmenable:i}$\Leftrightarrow$\ref{th:UbAmenable:iv} is a variant
of a fixed-point result mentioned by Berglund et al~\cite[2.3.30]{Berglund1989aos}
in the general setting of invariant means on a space $\Aset\subseteq\ellinfty(S)$.

\begin{theorem}
    \label{th:UbAmenable}
The following properties of an FIU semigroup $S$ are equivalent:
\begin{enumerate}[label=(\roman*)]
\item
    \label{th:UbAmenable:i}
There exists a left-invariant mean in $\Means$.
\item
    \label{th:UbAmenable:ii}
There exists a net $\{\mmeas_\gamma\}_\gamma$ in $\Mol^{+1}(S)$
such that for every $s \iin S$ we have $\lim_\gamma \mmeas_\gamma - s\conv\mmeas_\gamma = 0$
in the \wstar\ topology.
\item
    \label{th:UbAmenable:iii}
There exists a net $\{\mmeas_\gamma\}_\gamma$ in $\Mol^{+1}(S)$
such that for every $s \iin S$ we have $\lim_\gamma \mmeas_\gamma - s\conv\mmeas_\gamma = 0$
in the UEB$(S)$ topology.
\item
    \label{th:UbAmenable:iv}
Every slightly uniformly continuous affine action of $S$ on a bounded convex subset
of a locally convex space has an approximate fixed point.
\end{enumerate}
\end{theorem}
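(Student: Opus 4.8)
The plan is to prove the cycle of implications $\ref{th:UbAmenable:iv}\Rightarrow\ref{th:UbAmenable:i}\Rightarrow\ref{th:UbAmenable:ii}\Rightarrow\ref{th:UbAmenable:iii}\Rightarrow\ref{th:UbAmenable:iv}$, leaning on Lemma~\ref{lem:prelim} throughout and invoking Theorem~\ref{th:Day} only at the last step. For $\ref{th:UbAmenable:i}\Rightarrow\ref{th:UbAmenable:ii}$, I would start from a left-invariant mean $\mmeas\iin\Means$, use Lemma~\ref{lem:prelim}\ref{lem:prelim:dense} to choose a net $\{\mmeas_\gamma\}_\gamma$ in $\Mol^{+1}(S)$ with $\mmeas_\gamma\to\mmeas$ in the \wstar\ topology, and then, for each fixed $s\iin S$, apply Lemma~\ref{lem:prelim}\ref{lem:prelim:leftcon} to get $s\conv\mmeas_\gamma\to s\conv\mmeas=\mmeas$, whence $\mmeas_\gamma-s\conv\mmeas_\gamma\to 0$; the one net $\{\mmeas_\gamma\}_\gamma$ serves all $s$ at once.

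The implication $\ref{th:UbAmenable:ii}\Rightarrow\ref{th:UbAmenable:iii}$ is the heart of the matter and proceeds by a convexity argument in the spirit of Day. Write $E:=\Mol(S)$ and let $w$ and $u$ denote the \wstar\ topology and the UEB$(S)$ topology on $E$. By Lemma~\ref{lem:prelim}\ref{lem:prelim:EBdual} the dual of $(E,u)$ is $\Ub(S)$, and so is the dual of $(E,w)$; hence $w$ and $u$ are both compatible with the duality between $E$ and $\Ub(S)$ and therefore determine the same closed convex subsets of $E$~\cite[II.9.2]{Schaefer1971tvs}. Passing to the product $E^S$, and using that the dual of a product of locally convex spaces is the direct sum of the duals~\cite[IV.4.3]{Schaefer1971tvs}, the product topologies $\prod_{s}(E,w)$ and $\prod_{s}(E,u)$ are both compatible with the duality between $E^S$ and $\bigoplus_{s}\Ub(S)$, so they too have the same closed convex sets. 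Now set $D:=\{\{\mmeas-s\conv\mmeas\}_{s\in S}\mid\mmeas\iin\Mol^{+1}(S)\}\subseteq E^S$; it is convex, being the image of the convex set $\Mol^{+1}(S)$ under the linear map $\mmeas\mapsto\{\mmeas-s\conv\mmeas\}_{s}$. Property~\ref{th:UbAmenable:ii} says exactly that $0$ lies in the closure of $D$ for $\prod_{s}(E,w)$, hence $0$ lies in the closure of $D$ for $\prod_{s}(E,u)$, which is precisely~\ref{th:UbAmenable:iii}.

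For $\ref{th:UbAmenable:iii}\Rightarrow\ref{th:UbAmenable:iv}$, I would first note that each singleton $\{f\}$ with $f\iin\Ub(S)$ is a UEB$(S)$ set, so the UEB$(S)$ topology refines the \wstar\ topology and the net of~\ref{th:UbAmenable:iii} also satisfies $\lim_\gamma\mmeas_\gamma-s\conv\mmeas_\gamma=0$ in the \wstar\ topology. Given a slightly uniformly continuous affine action $S\action C$ on a bounded convex $C$ in a locally convex space $X$, fix $x_0\iin C$ with $\Phi\colon s\mapsto s\act x_0$ uniformly continuous; its range lies in the bounded set $C$, so Lemma~\ref{lem:prelim}\ref{lem:prelim:wstarext} applies to the linear extension $\widetilde\Phi\colon\Mol(S)\to X$. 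For $\mmeas=\sum_{i}r_i\pmass(s_i)\iin\Mol^{+1}(S)$, the relation $\sum_{i}r_i=1$, the affineness of $x\mapsto s\act x$, and the identity $\Phi(st)=s\act\Phi(t)$ give $\widetilde\Phi(\mmeas)=\sum_{i}r_i(s_i\act x_0)\iin C$ and $\widetilde\Phi(s\conv\mmeas)=s\act\widetilde\Phi(\mmeas)$; so with $x_\gamma:=\widetilde\Phi(\mmeas_\gamma)\iin C$ we obtain $x_\gamma-s\act x_\gamma=\widetilde\Phi(\mmeas_\gamma-s\conv\mmeas_\gamma)\to 0$ in the weak topology of $X$, and Theorem~\ref{th:Day} yields an approximate fixed point. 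For $\ref{th:UbAmenable:iv}\Rightarrow\ref{th:UbAmenable:i}$, apply~\ref{th:UbAmenable:iv} to the affine action $(s,\mmeas)\mapsto s\conv\mmeas$ of $S$ on $\Means$: this $\Means$ is a norm-bounded convex subset of $\Ub(S)^\ast$ with the \wstar\ topology, and the action is slightly uniformly continuous because by Lemma~\ref{lem:prelim}\ref{lem:prelim:rightcon} the orbit map $s\mapsto s\conv\pmass(s_0)$ at any point $\pmass(s_0)\iin\Means$ is uniformly continuous. The resulting approximate fixed point is a net $\{\mmeas_\gamma\}_\gamma$ in $\Means$ with $\mmeas_\gamma-s\conv\mmeas_\gamma\to 0$ in the \wstar\ topology for every $s$; since $\Means$ is \wstar-compact, some subnet converges to a point $\mmeas\iin\Means$, and then $s\conv\mmeas_\gamma\to\mmeas$ while also $s\conv\mmeas_\gamma\to s\conv\mmeas$ by Lemma~\ref{lem:prelim}\ref{lem:prelim:leftcon}, so $s\conv\mmeas=\mmeas$ for all $s$ and $\mmeas$ is a left-invariant mean.

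I expect $\ref{th:UbAmenable:ii}\Rightarrow\ref{th:UbAmenable:iii}$ to be the real obstacle. The crucial observation is that the UEB$(S)$ topology and the \wstar\ topology on $\Mol(S)$ share the continuous dual $\Ub(S)$ (Lemma~\ref{lem:prelim}\ref{lem:prelim:EBdual}), and that this coincidence is inherited by the product $\Mol(S)^S$, so that the single convex set $D$ has one and the same closure in both product topologies; granted that, weak$^\ast$ approximability of $0$ by elements of $D$ is automatically UEB$(S)$ approximability, which is all that~\ref{th:UbAmenable:iii} asks. The remaining three implications are essentially bookkeeping on top of Lemma~\ref{lem:prelim} and Theorem~\ref{th:Day}; the only spots calling for a little care are the identity $\widetilde\Phi(s\conv\mmeas)=s\act\widetilde\Phi(\mmeas)$ on $\Mol^{+1}(S)$, which uses that the coefficients sum to $1$, and the use of \wstar-compactness of $\Means$ in $\ref{th:UbAmenable:iv}\Rightarrow\ref{th:UbAmenable:i}$.
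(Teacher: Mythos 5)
Your proof is correct and takes essentially the same approach as the paper: the same appeals to Lemma~\ref{lem:prelim}\ref{lem:prelim:dense}, \ref{lem:prelim:leftcon}, \ref{lem:prelim:rightcon}, \ref{lem:prelim:wstarext}, with your \ref{th:UbAmenable:ii}$\Rightarrow$\ref{th:UbAmenable:iii} argument being exactly the proof of Theorem~\ref{th:Day} inlined for the two topologies on $\Mol(S)$ via Lemma~\ref{lem:prelim}\ref{lem:prelim:EBdual}. The only cosmetic differences are that you arrange the implications in a cycle and close it by applying \ref{th:UbAmenable:iv} to the action on all of $\Means$ rather than on $\Mol^{+1}(S)$.
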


\begin{proof}
First let $\mmeas\in\Means$ be a left-invariant mean.
By \ref{lem:prelim}\ref{lem:prelim:dense} there is a net $\{\mmeas_\gamma\}_\gamma$
in $\Mol^{+1}(S)$ such that $\lim_\gamma \mmeas_\gamma =\mmeas$ in the \wstar\ topology.
Then $\lim_\gamma s\conv\mmeas_\gamma =s\conv\mmeas$ for every $s\iin S$
by~\ref{lem:prelim}\ref{lem:prelim:leftcon},
hence $\lim_\gamma \mmeas_\gamma - s\conv\mmeas_\gamma = 0$.
That proves \ref{th:UbAmenable:i}$\Rightarrow$\ref{th:UbAmenable:ii}.

To prove \ref{th:UbAmenable:ii}$\Rightarrow$\ref{th:UbAmenable:i},
let $\{\mmeas_\gamma\}_\gamma$ be a net in $\Mol^{+1}(S)$
such that $\lim_\gamma \mmeas_\gamma - s\conv\mmeas_\gamma = 0$
in the \wstar\ topology for every $s\iin S$.
The set $\Means$ is \wstar\ compact, hence the net $\{\mmeas_\gamma\}_\gamma$
has a cluster point $\mmeas\in\Means$.
Then $s\conv\mmeas = \mmeas$ for $s\iin S$ by~\ref{lem:prelim}\ref{lem:prelim:leftcon}.

Obviously \ref{th:UbAmenable:iii}$\Rightarrow$\ref{th:UbAmenable:ii}.
The implication \ref{th:UbAmenable:ii}$\Rightarrow$\ref{th:UbAmenable:iii} follows
from~\ref{lem:prelim}\ref{lem:prelim:EBdual} and Theorem~\ref{th:Day}.

The action $(s,\mmeas)\mapsto s\conv\mmeas$ of $S$ on the convex set $\Mol^{+1}(S)\subseteq\Mol(S)$
is slightly uniformly continuous when $\Mol(S)$ is given the \wstar\ topology.
Indeed, by~\ref{lem:prelim}\ref{lem:prelim:rightcon} for every $\mmeas\iin\Mol^{+1}(S)$ the mapping
$s\mapsto s\conv\mmeas$ is uniformly continuous.
Hence \ref{th:UbAmenable:iv}$\Rightarrow$\ref{th:UbAmenable:ii}.

To prove \ref{th:UbAmenable:ii}$\Rightarrow$\ref{th:UbAmenable:iv},
let $S\action C$ be a slightly uniformly continuous affine action
on a bounded convex subset $C$ of a locally convex space $X$.
There is $x\iin C$ for which the mapping $s\mapsto \Phi(s) := s\act x$ is uniformly continuous
from $S$ to $X$.
Let $\widetilde{\Phi} \colon \Mol(S) \to X$ be the mapping
from~\ref{lem:prelim}\ref{lem:prelim:wstarext}.
We have $s\act\Phi(t)=\Phi(st)$ for $s,t\iin S$,
hence $s\act\widetilde{\Phi}(\mmeas)=\widetilde{\Phi}(s\conv\mmeas)$ for $s\iin S$
and $\mmeas\iin\Mol^{+1}(S)$.
Let $\mmeas_\gamma$ be as in \ref{th:UbAmenable:ii}.
Set $x_\gamma := \widetilde{\Phi}(\mmeas_\gamma)$.
Then
$
x_\gamma - s\act x_\gamma
= \widetilde{\Phi}(\mmeas_\gamma) - \widetilde{\Phi}(s\conv\mmeas_\gamma)
= \widetilde{\Phi}(\mmeas_\gamma - s\conv\mmeas_\gamma)
$,
and by~\ref{lem:prelim}\ref{lem:prelim:wstarext} we get
$\lim_\gamma x_\gamma - s\act x_\gamma = 0$ in the weak topology of $X$ for every $s\iin S$.
Hence \ref{th:UbAmenable:iv} follows by Theorem~\ref{th:Day}.
\end{proof}


\section{Amenable groups}
    \label{sec:amenable}

When $G$ is a topological group, $\ru G$ is an FIU group.
The group $G$ is said to be \emph{amenable}~\cite{Grigorchuk2017aep}
iff there is a left-invariant mean on $\Ub(\ru G)$.
The following instance of Theorem~\ref{th:UbAmenable}
is due to Schneider and Thom (Theorem~3.2 and Corollary~4.8 in~\cite{Schneider2018ofs}).

\begin{theorem}
    \label{th:amenable}
The following properties of a topological group $G$ are equivalent:
\begin{enumerate}[label=(\roman*)]
\item
    \label{th:amenable:i}
$G$ is amenable.
\item
    \label{th:amenable:ii}
There exists a net $\{\mmeas_\gamma\}_\gamma$ in $\Mol^{+1}(G)$
such that for every $g\iin G$ we have $\lim_\gamma \mmeas_\gamma - g\conv\mmeas_\gamma = 0$
in the $\Ub(\ru G)$-weak topology.
\item
    \label{th:amenable:iii}
There exists a net $\{\mmeas_\gamma\}_\gamma$ in $\Mol^{+1}(G)$
such that for every $g\iin G$ we have $\lim_\gamma \mmeas_\gamma - g\conv\mmeas_\gamma = 0$
in the UEB$(\ru G)$ topology.
\item
    \label{th:amenable:iv}
Every slightly uniformly continuous affine action of $\ru G$ on a bounded convex subset
of a locally convex space has an approximate fixed point.
\end{enumerate}
\end{theorem}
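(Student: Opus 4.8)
The plan is to observe that Theorem~\ref{th:amenable} is exactly the instance $S=\ru G$ of Theorem~\ref{th:UbAmenable}. The single preliminary fact needed is that $\ru G$ is an FIU group, which is the opening sentence of the section and item~(a) of the Examples in section~\ref{sec:prelim}; I would recall the reason briefly. For $g\iin G$ the right translate $f_g$ is $f$ composed with right multiplication by $g$, and right multiplication is a uniform isomorphism of $\ru G$ onto itself, so $f_g\iin\Ub(\ru G)$ whenever $f\iin\Ub(\ru G)$. The left translate $_g f$ is $f$ composed with left multiplication by $g$, which is uniformly continuous on $\ru G$ because conjugation by $g$ sends neighbourhoods of the identity to neighbourhoods of the identity; hence $_g f\iin\Ub(\ru G)$ as well. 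So $\ru G$ is an FIU group.

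Granting this, I would simply match the four conditions. Condition~\ref{th:amenable:i} is the definition of amenability, the existence of a left-invariant mean on $\Ub(\ru G)$, which is \ref{th:UbAmenable}\ref{th:UbAmenable:i} for $S=\ru G$. By definition the \wstar\ topology on $\Mol(\ru G)$ is the $\Ub(\ru G)$-weak topology, so condition~\ref{th:amenable:ii} is \ref{th:UbAmenable}\ref{th:UbAmenable:ii} verbatim (with $\Mol^{+1}(G)$ read as $\Mol^{+1}(\ru G)$). Conditions~\ref{th:amenable:iii} and~\ref{th:amenable:iv} are likewise \ref{th:UbAmenable}\ref{th:UbAmenable:iii} and \ref{th:UbAmenable}\ref{th:UbAmenable:iv} word for word. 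Therefore the equivalence of \ref{th:amenable:i} through \ref{th:amenable:iv} follows at once from Theorem~\ref{th:UbAmenable}.

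I expect no real obstacle: all the work is already carried out in Theorem~\ref{th:UbAmenable}, which in turn uses only Lemma~\ref{lem:prelim} and Theorem~\ref{th:Day}. The one point worth stating explicitly is the choice of uniformity. A topological group carries several FIU structures --- the left, right, upper, lower and fine uniformities --- and both the notion of amenability and the action in condition~\ref{th:amenable:iv} are attached specifically to the \emph{right} uniformity $\ru G$; it is for that FIU group that Theorem~\ref{th:UbAmenable} is invoked. One could instead give a self-contained argument by re-running the proof of Theorem~\ref{th:UbAmenable} with $S$ replaced throughout by $\ru G$, but that would merely copy the earlier proof with no gain.
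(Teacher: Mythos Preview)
Your proposal is correct and matches the paper's approach exactly: the paper presents Theorem~\ref{th:amenable} without proof, simply noting that $\ru G$ is an FIU group and calling the theorem ``the following instance of Theorem~\ref{th:UbAmenable}''. Your additional verification that $\ru G$ is an FIU group is fine but not needed, since the paper already records this as Example~(a) in section~\ref{sec:prelim}.
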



\section{B-amenable groups}
    \label{sec:Bamenable}

When $G$ is a topological group, $\tfine G$ is an FIU group.
The group $G$ is said to be \emph{B-amenable}~\cite{Grigorchuk2017aep}
iff there is a left-invariant mean on $\Cb(G)=\Ub(\tfine G)$.

\begin{theorem}
    \label{th:Bamenable}
The following properties of a topological group $G$ are equivalent:
\begin{enumerate}[label=(\roman*)]
\item
    \label{th:Bamenable:i}
$G$ is B-amenable.
\item
    \label{th:Bamenable:ii}
There exists a net $\{\mmeas_\gamma\}_\gamma$ in $\Mol^{+1}(G)$
such that for every $g \iin G$ we have $\lim_\gamma \mmeas_\gamma - g\conv\mmeas_\gamma = 0$
in the $\Cb(G)$-weak topology.
\item
    \label{th:Bamenable:iii}
There exists a net $\{\mmeas_\gamma\}_\gamma$ in $\Mol^{+1}(G)$
such that for every $g \iin G$ we have $\lim_\gamma \mmeas_\gamma - g\conv\mmeas_\gamma = 0$
in the EB$(G)$ topology.
\item
    \label{th:Bamenable:iv}
Every slightly continuous affine action of $G$ on a bounded convex subset of a locally convex space
has an approximate fixed point.
\item
    \label{th:Bamenable:v}
Every separately continuous affine action of $G$ on a bounded convex subset of a locally convex space
has an approximate fixed point.
\item
    \label{th:Bamenable:vi}
Every jointly continuous affine action of $G$ on a bounded convex subset of a locally convex space
has an approximate fixed point.
\end{enumerate}
\end{theorem}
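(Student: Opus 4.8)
\emph{Proof plan.} The argument rests on applying Theorem~\ref{th:UbAmenable} to the FIU group $S=\tfine G$. Since $\Ub(\tfine G)=\Cb(G)$, the \wstar\ topology on $\Ub(\tfine G)^\ast$ is the $\Cb(G)$-weak topology, the $\ueb(\tfine G)$ topology is the EB$(G)$ topology, and the left-invariant means in $\mathfrak{M}^{+1}(\tfine G)$ are exactly the left-invariant means on $\Cb(G)$, i.e.\ property~\ref{th:Bamenable:i}. So Theorem~\ref{th:UbAmenable} at once gives the equivalence of~\ref{th:Bamenable:i},~\ref{th:Bamenable:ii},~\ref{th:Bamenable:iii} and of the statement (iv$'$): \emph{every slightly uniformly continuous affine action of $\tfine G$ on a bounded convex subset of a locally convex space has an approximate fixed point}. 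To fold~\ref{th:Bamenable:iv} into this list I would record the elementary fact that a mapping $\Phi\colon G\to X$ into a locally convex space is continuous if and only if it is uniformly continuous from $\tfine G$ to $X$: the fine uniformity is generated by the continuous pseudometrics on $G$, and for each continuous seminorm $p$ on $X$ the map $(s,t)\mapsto p(\Phi(s)-\Phi(t))$ is a continuous pseudometric on $G$ precisely when $\Phi$ is continuous. Hence a slightly continuous affine action of $G$ is literally the same datum as a slightly uniformly continuous affine action of $\tfine G$, (iv$'$) coincides with~\ref{th:Bamenable:iv}, and \ref{th:Bamenable:i}--\ref{th:Bamenable:iv} are all equivalent.

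The implications \ref{th:Bamenable:iv}$\Rightarrow$\ref{th:Bamenable:v}$\Rightarrow$\ref{th:Bamenable:vi} are immediate, since a jointly continuous action is separately continuous and a separately continuous action is slightly continuous, so the family of admissible actions shrinks along the chain while the approximate-fixed-point requirement only weakens. It therefore remains to prove \ref{th:Bamenable:vi}$\Rightarrow$\ref{th:Bamenable:iii}.

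For this I would apply~\ref{th:Bamenable:vi} to one particular action: the natural affine action $(g,\mmeas)\mapsto g\conv\mmeas$ of $G$ on the $G$-invariant bounded convex set $\Mol^{+1}(G)$, regarded inside the locally convex space $\Mol(G)$ carrying the EB$(G)$ topology (whose dual is $\Cb(G)$ by Lemma~\ref{lem:prelim}\ref{lem:prelim:EBdual}). An approximate fixed point for this action is, verbatim, a net of the kind demanded in~\ref{th:Bamenable:iii}, so the whole thing reduces to showing that this action is \emph{jointly continuous} --- and that is the step I expect to be the main obstacle. Continuity in the second variable is routine: if $\Fset\subseteq\Cb(G)$ is equicontinuous and bounded then so is $\{{}_{g_0}f\mid f\iin\Fset\}$, whence $\mmeas\mapsto g_0\conv\mmeas$ is EB-continuous for each fixed $g_0$.

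For joint continuity, take $g_\gamma\to g_0$ in $G$ and $\mmeas_\gamma\to\mmeas_0$ in the EB$(G)$ topology with $\mmeas_\gamma\iin\Mol^{+1}(G)$, set $a_\gamma:=g_\gamma g_0^{-1}\to e$ and $\nu_\gamma:=g_0\conv\mmeas_\gamma$, so that $\nu_\gamma\to g_0\conv\mmeas_0$ in EB by the previous step and $g_\gamma\conv\mmeas_\gamma-g_0\conv\mmeas_0=(a_\gamma\conv\nu_\gamma-\nu_\gamma)+(\nu_\gamma-g_0\conv\mmeas_0)$; it suffices to show $a_\gamma\conv\nu_\gamma-\nu_\gamma\to 0$ in EB. The crucial point is that EB-convergence of the molecular probability measures $\nu_\gamma$ to the finitely supported $\nu_0=g_0\conv\mmeas_0$ forces their mass to become \emph{topologically} concentrated near the finite support $F$ of $\nu_0$: testing $\nu_\gamma$ against functions that are Lipschitz with respect to a continuous pseudometric on $G$ whose balls are small --- such pseudometrics exist because $G$ is completely regular --- shows that for every neighbourhood $U$ of $F$, eventually all but an arbitrarily small fraction of the mass of $\nu_\gamma$ sits in $U$. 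Given an equicontinuous bounded $\Fset\subseteq\Cb(G)$ and $\varepsilon>0$, one chooses $U$ small enough that $\Fset$ varies by less than $\varepsilon$ across $U$ near each point of $F$, together with a neighbourhood $W$ of $e$ and a smaller neighbourhood $U'$ of $F$ with $WU'\subseteq U$; then for $\gamma$ large the atoms of $\nu_\gamma$ carrying all but $\varepsilon$ of the mass lie in $U'$ and, with their $a_\gamma$-translates, in $U$, so that, writing $\nu_\gamma=\sum_i r_i^\gamma\pmass(s_i^\gamma)$, one gets $\sup_{f\in\Fset}\card{\nu_\gamma({}_{a_\gamma}f-f)}\le\sum_i r_i^\gamma\sup_{f\in\Fset}\card{f(a_\gamma s_i^\gamma)-f(s_i^\gamma)}=O(\varepsilon)$. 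This yields $a_\gamma\conv\nu_\gamma-\nu_\gamma\to 0$ in EB, hence joint continuity, and the chain \ref{th:Bamenable:i}$\Leftrightarrow$\ref{th:Bamenable:ii}$\Leftrightarrow$\ref{th:Bamenable:iii}$\Leftrightarrow$\ref{th:Bamenable:iv}$\Rightarrow$\ref{th:Bamenable:v}$\Rightarrow$\ref{th:Bamenable:vi}$\Rightarrow$\ref{th:Bamenable:iii} then closes. I expect this tightness-plus-equicontinuity bookkeeping to be the delicate part: the naive topologies on $\Mol^{+1}(G)$ under which an approximate fixed point would yield an invariant mean on all of $\Cb(G)$ fail to make the action jointly continuous, and it is precisely the EB$(G)$ topology, through the uniform tightness it enforces, that repairs this.
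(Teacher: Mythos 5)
Your proof is correct, and its skeleton matches the paper's: equivalence of \ref{th:Bamenable:i}--\ref{th:Bamenable:iv} by specializing Theorem~\ref{th:UbAmenable} to $S=\tfine G$ (your explicit check that continuity of a map from $G$ into a locally convex space coincides with uniform continuity from $\tfine G$ is a point the paper leaves implicit), the trivial chain \ref{th:Bamenable:iv}$\Rightarrow$\ref{th:Bamenable:v}$\Rightarrow$\ref{th:Bamenable:vi}, and closing the loop by applying \ref{th:Bamenable:vi} to the convolution action of $G$ on $\Mol^{+1}(G)$. Where you genuinely diverge is in how joint continuity of that action is obtained, and in which topology. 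The paper equips $\Mol^{+1}(G)$ with the $\Cb(G)$-weak topology and proves \ref{th:Bamenable:vi}$\Rightarrow$\ref{th:Bamenable:ii} by citing two facts from~\cite{Pachl2013usm}: that the $\Cb(G)$-weak and $\Ub(\ru G)$-weak topologies coincide on $\Mol^{+1}(G)$ (5.16), and a joint-continuity theorem for the convolution action (9.36-3); the work is thus routed through the right uniformity and outsourced to the monograph. You instead equip $\Mol(G)$ with the EB$(G)$ topology and prove \ref{th:Bamenable:vi}$\Rightarrow$\ref{th:Bamenable:iii} by a self-contained verification: reduce to $a_\gamma\to e$, observe that convergence of $\nu_\gamma\iin\Mol^{+1}(G)$ to a finitely supported limit forces the mass to concentrate on any neighbourhood of the finite support (a bump function argument needing only $\Cb(G)$-weak convergence), and then combine equicontinuity of the test family $\Fset$ near that finite set with continuity of multiplication to bound $\sup_{f\in\Fset}\abs{\nu_\gamma({}_{a_\gamma}f-f)}$ by $2\varepsilon+2\varepsilon\sup_{f\in\Fset}\norm{f}$. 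That estimate is sound (the displayed inequality in your writeup should be read as splitting the atoms into those inside and outside $U'$, as your prose indicates). What your route buys is independence from the machinery of uniform measures and the right uniformity, at the cost of a page of tightness bookkeeping; what the paper's route buys is brevity, plus the structural insight that the weak and UEB topologies agree on the means, which is exactly the content your concentration argument re-proves in the special case needed.
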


\begin{proof}
Properties \ref{th:Bamenable:i}, \ref{th:Bamenable:ii},
\ref{th:Bamenable:iii} and \ref{th:Bamenable:iv}
are equivalent by Theorem~\ref{th:UbAmenable}.
Obviously \ref{th:Bamenable:iv}$\Rightarrow$\ref{th:Bamenable:v}$\Rightarrow$\ref{th:Bamenable:vi}.

To prove \ref{th:Bamenable:vi}$\Rightarrow$\ref{th:Bamenable:ii},
it is enough to show that the action $\conv$ of $G$ on $\Mol^{+1}(G)$ is jointly continuous
when $\Mol^{+1}(G)$ is equipped with the $\Cb(G)$-weak topology.
By~\cite[5.16]{Pachl2013usm},
the $\Cb(G)$-weak topology and the $\Ub(\ru G)$-weak topology coincide on $\Mol^{+1}(G)$.
It follows that the action is jointly continuous by~\cite[9.36-3]{Pachl2013usm}.
\end{proof}

In the terminology of~\cite{Barroso2017tga},
\ref{th:Bamenable:vi} states that $G$ has the AFP property on every bounded
convex subset of a locally convex space.


\section{Concluding remarks}

The equivalence
\ref{th:Bamenable:iv}$\Leftrightarrow$\ref{th:Bamenable:v}$\Leftrightarrow$\ref{th:Bamenable:vi}
in Theorem~\ref{th:Bamenable} stands in contrast to fixed-point properties for affine actions
on compact convex sets~\cite{Day1961fpt}\cite{Day1964cfp}\cite{Grigorchuk2017aep}:
The infinite symmetric group has the fixed-point property for separately continuous actions
but not for slightly continuous ones.
I do not know if there is a topological group that has the fixed-point property for affine
jointly continuous actions but not for separately continuous ones.

Barroso et al~\cite[3.4]{Barroso2017tga} prove that every amenable locally compact group
has the AFP property on every complete convex bounded subset of a locally convex space,
and ask if the same holds without the \emph{complete} restriction.
The answer is yes by Theorem~\ref{th:Bamenable}, because
every amenable locally compact group is B-amenable~\cite[3.10]{Grigorchuk2017aep}. \\

\textbf{Acknowledgment.}
I wish to thank Pierre de la Harpe and Alf Onshuus Ni\~{n}o for helpful comments.


\end{document}